\documentclass[10pt,a4paper]{article}
\usepackage[utf8]{inputenc}
\usepackage{amsmath,amsfonts,amssymb,mathtools,mathrsfs,%
enumitem}
\usepackage{amsthm}
\usepackage{comment}
\usepackage{hyperref}
\usepackage[backend=biber,
style=ieee
]{biblatex} 
\usepackage[margin=4cm]{geometry}
\addbibresource{bibliography.bib}
\usepackage{xcolor}
\usepackage{authblk}

\newtheorem{theorem}{Theorem}
\newtheorem{proposition}[theorem]{Proposition}

\newtheorem{lemma}[theorem]{Lemma}

\theoremstyle{remark}
\newtheorem{remark}[theorem]{Remark} 
\theoremstyle{definition}
\newtheorem{example}[theorem]{Example}

\newcommand{\nulllagrangianlemma}{{\cite[Lem.~27]{OurCounterexamplePaper}}}

\definecolor{dkgreen}{rgb}{0,0.4,0}
\definecolor{dkred}{rgb}{0.8,0.0,0}

\title{%
Sufficient conditions for the absence of relaxation gaps in state-constrained optimal control}

\date{}

\newcommand{\R}{\mathbb R}

\DeclareMathOperator{\dist}{\operatorname{dist}}

\newcommand{\domain}{\Omega}
\newcommand{\T}{T}
\newcommand{\controls}{U}
\newcommand{\g}{g}
\newcommand{\f}{f}
\newcommand{\F}{F}

\newcommand{\initial}{{\mathbf{x}_0}}
\newcommand{\normalL}{L}

\newcommand{\mincurves}{M_{\mathrm{c}}}
\newcommand{\minarcs}{M_{\mathrm{y}}}
\newcommand{\minoccupation}{M_{\mathrm{o}}}

\newcommand{\destination}{X}

\newcounter{Ccnt}
\makeatletter
\newcommand\C[1]{%
\@ifundefined{C-#1}%
  {\stepcounter{Ccnt}\expandafter\xdef\csname C-#1\endcsname{\arabic{Ccnt}}}%
  {}%
c_{\csname C-#1\endcsname}}
\makeatother 

\begin{document}

\author[1]{Nicolas Augier}
\author[1,2]{Milan Korda}
\author[2,3]{Rodolfo Rios-Zertuche}
\affil[1]{\footnotesize 
LAAS-CNRS, Toulouse, France
}
\affil[2]{\footnotesize 
Faculty of Electrical Engineering, Czech Technical University in Prague, Czechia
}
\affil[3]{\footnotesize Department of Mathematics and Statistics, UiT The Arctic University of Norway}

\maketitle

\begin{abstract}
 This work presents new 
 sufficient conditions for the absence of a gap corresponding to %
Young measure and occupation measure relaxations for constrained optimal control problems. Unlike existing conditions, these sufficient conditions do not rely on convexity of the Lagrangian or the set of admissible velocities. We use these conditions to derive new bounds for the size of the relaxation gap.
\end{abstract}

\section{Introduction}
\label{sec:intro}

\if{
New outline:
\begin{itemize}
    \item practical importance of occupation measure relaxation
    \item statement of the problem (when is $\mincurves=\minoccupation$?)
    \item formulation of classical problem
    \item formulation of occupation measure relaxation
    \item formulation of young measure relaxation, explaining it is intermediary and of theoretical importance
    \item Lemma 1
    \item organization of the paper (contributions and outline)
\end{itemize}
}\fi

The occupation measure relaxation has the power of turning highly-nonlinear optimal control problems into  computationally-tractable hierarchy of semi-definite programs \cite{OCP08,korda2018moments,augier2024symmetry}. 
This relaxation was hinted at already in the seminal work of L.C.~Young~\cite{young_lectures}, put on a sound theoretical footing by Lewis and Vinter~\cite{lewis1980relaxation, V93}, and finally exploited computationally in conjunction with moment-sum-of-squares hierarchies applicable when the problem data are semialgebraic, %
starting with the work~\cite{OCP08}, where it was developed for optimal control of ODEs, and was eventually generalized to higher-dimensional problems involving PDEs in \cite{korda2018moments}; see also \cite{fantuzzi2022sharpness,henrion2024occupation}. Since then, this approach has proved to be remarkably successful, allowing for powerful tools of convex optimization to be deployed for tackling this challenging non-convex  problem and many of its variations.  

The success of the occupation measure relaxation hinges on the equality of the optimal value of the original problem and of the relaxed one. Unfortunately, this equality does not hold without further assumptions; see \cite{OurCounterexamplePaper,palladino2014minimizers} for counterexamples.

At present, the only sufficient conditions known to the authors are due to~\cite{V93} and revolve around the convexity of the Lagrangian and of the set of admissible velocities. In this note we show that these conditions are far from necessary and propose a complementary set of sufficient conditions that do not rely on convexity and have a strong practical appeal. In addition to not relying on convexity, the proposed conditions do not require that the state or control constraint sets be compact, contrary to the setting of~\cite{V93}. These conditions are based on a generalization of the Filippov--Wa\v zewsky theorem \cite{frankowska2000filippov} and a recent superposition result due to Bernard~\cite{bernard2008generic}.

In the higher-dimensional case of optimal control of partial differential equations, the question of the existence of a relaxation gap is also a very active research topic.
Very few criteria exist in full generality. In particular, it turns out that for general higher-dimensional variational problems, convexity conditions analogous to those of Theorem \ref{VINTER_THM} are not sufficient in order to ensure the absence of relaxation gap (see e.g.~\cite{korda2022gap}), and they instead need to be strengthened  considerably; see e.g.~\cite{henrion2024occupation}. Other examples of gaps in higher-dimensional settings can be found e.g.~in~\cite{fantuzzi2022sharpness}.

\section{Problem statement}
Let $\domain$ be a subset of $\R^n$, and choose an initial point $\initial\in\domain$, a target set $\destination\subset\domain$, a fixed amount of time $\T>0$, and a set of controls $\controls\subset\R^n$. Let also $\normalL\colon\R\times\domain\times\controls\to\R$ be a function that will serve as Lagrangian density, $\f\colon\R\times\domain\times\controls\to\R^n$ is the controlled vector field, and $\g\colon\R^n\to\R$ a function giving the terminal cost.

The original (classical) problem that we want to consider and eventually relax is $\mincurves=\mincurves(\domain,\destination)$ defined by
\begin{alignat}{2} \label{min:orig}
 \mincurves= &\!\inf\limits_{u}  & \;&\displaystyle\int_0^\T \normalL(t,\gamma(t),u(t))dt +g(\gamma(\T)) \\ 
 &\textrm{s.t.}& & \text{$u\colon[0,\T]\to\controls$ measurable},\nonumber\\
 &&& \gamma\colon[0,T]\to\Omega\;\text{absolutley continuous}, \nonumber\\
 &&& \gamma'(t)=f(t,u(t),\gamma(t))\;\text{a.e.}\;t\in[0,\T], \nonumber\\
 &&&\gamma(0)=\initial,\;\gamma(\T)\in\destination,\nonumber
\end{alignat}
where we are minimizing the sum of the integral of $\normalL$ and the terminal cost  given by $\g$ over all measurable controls $u\colon[0,\T]\to\controls$ that determine admissible curves $\gamma\colon[0,\T]\to\domain$ by
\[\gamma(t)=\initial+\int_0^tf(s,\gamma(s),u(s))\,ds,\]
that end in $\gamma(\T)\in\destination$. %

Throughout, we will assume that \emph{the image $f(K)$ of every compact set $K\subseteq [0,\T]\times\domain\times\controls$ is compact} and that \emph{$\mincurves$ is finite}; in particular, this implies that there is at least one admissible curve joining $\initial $ and $\destination$. 
We will further specify  $\domain$, $\destination$, $\controls$, $\normalL$, $\f$, and $\g$ later on. 
We consider  the \emph{occupation measure relaxation} $\minoccupation=\minoccupation(\domain,\destination)$ given by
\begin{equation}\label{min:occupation}
    \minoccupation=\inf_{\mu,\mu_\partial} \int_{[0,\T]\times\domain\times\controls} \hspace{-7mm}\normalL(t,x,u)\,d\mu(t,x,u)
    +\int_{\domain}\g(x)\,d\mu_\partial(x),
\end{equation}
where the infimum is taken over all positive Borel measures $\mu$ on $[0,\T]\times\domain\times\controls$ and all Borel probability measures $\mu_\partial$ on $\destination$ satisfying that 
\begin{equation*}%
\int_{[0,T]\times\domain\times\controls}\|f(t,x,u)\|d\mu+\int_{\destination}\|x\|\,d\mu_\partial(x)
\end{equation*}
be finite,
and, for all $C^\infty$, compactly-supported functions $\phi\colon[0,\T]\times\domain\to\R$,
\begin{equation*}
    \int_{[0,\T]\times\domain\times\controls}\frac{\partial\phi}{\partial t}+\frac{\partial\phi}{\partial x}\cdot f(t,x,u)\,d\mu
    +\phi(0,\initial)-\int_\controls \phi(\T,x)d\mu_\partial(x)=0.
\end{equation*}

As an auxiliary tool, albeit itself of theoretical interest, we will use the well-known \emph{Young measure relaxation} $\minarcs=\minarcs(\domain,\destination)$ defined as
\begin{equation}
    \label{min:relaxed}
    \minarcs=\inf_{(\nu_t)_{t}}\int_0^\T\int_\controls
    \normalL(t,\gamma(t),u)d\nu_t(u)dt
    +g(\gamma(\T)),
\end{equation}
where the infimum is taken over all families $(\nu_t)_{t\in[0,T]}$ of compactly-supported Borel probability measures $\nu_t$ on the set of controls $\controls$ such that the integral curve
\[ \gamma(t) =\initial +\int_0^t\int_\controls f(s,\gamma(s),u)\,d\nu_s(u)\,ds\]
remains in $\domain$ and
satisfies $\gamma(\T)\in\destination$.
Here we are minimizing over all \emph{Young measures} $(\nu_t)_{t\in[0,T]}$. Taking one such measure, we define $\gamma$ to be its integral curve starting at $\initial$ (which means that $\gamma(0)=\initial$ and, for almost every $t\in[0,T]$, its derivative $\gamma'(t)=\int_Uf(t,\gamma(t),u)\,d\nu_t(u)$ is the average speed  provided by $\f$ over $\nu_t$).

\subsection{Previously known results}
\label{sec:previous}
In contrast with the desirable equalities $\minoccupation=\minarcs=\mincurves$, in general we only have the following relations, which are proved in Appendix \ref{appendix}.
\begin{lemma}\label{lem:easyineqs}
We have 
\begin{equation*}%
    \minoccupation=\minarcs\leq\mincurves.
 \end{equation*}

\end{lemma}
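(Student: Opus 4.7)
My plan is to prove three inequalities that together yield the lemma: $\minarcs \leq \mincurves$, $\minoccupation \leq \minarcs$, and $\minarcs \leq \minoccupation$. The first of these is an elementary embedding argument: given an admissible classical pair $(u,\gamma)$ for \eqref{min:orig}, set $\nu_t = \delta_{u(t)}$ for almost every $t$. The family $(\nu_t)_t$ consists of probability measures on $\controls$, the integral-curve identity for this Young measure reduces to the classical ODE $\gamma'(t) = \f(t,\gamma(t),u(t))$, and the cost in \eqref{min:relaxed} evaluates to the classical cost pointwise. Taking the infimum over classical admissible pairs gives $\minarcs \leq \mincurves$.

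For $\minoccupation \leq \minarcs$, given an admissible Young measure $(\nu_t)_t$ with integral curve $\gamma$, I would define the occupation pair
\[
\int \phi\, d\mu \;=\; \int_0^{\T}\!\!\int_{\controls}\! \phi(t,\gamma(t),u)\, d\nu_t(u)\, dt,
\qquad \mu_\partial = \delta_{\gamma(\T)},
\]
for continuous $\phi$. Admissibility in \eqref{min:occupation} then follows from the chain rule, since for every test function $\phi$,
\[
\int_0^{\T}\!\!\int_{\controls}\! \Bigl(\tfrac{\partial\phi}{\partial t}(t,\gamma(t)) + \tfrac{\partial\phi}{\partial x}(t,\gamma(t))\cdot \f(t,\gamma(t),u)\Bigr) d\nu_t(u)\, dt = \phi(\T,\gamma(\T)) - \phi(0,\initial),
\]
which is precisely the required continuity identity after accounting for the $\mu_\partial$ boundary term. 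The cost functionals in \eqref{min:occupation} and \eqref{min:relaxed} agree by construction.

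The substantive direction is $\minarcs \leq \minoccupation$, for which I would invoke the superposition principle of Bernard cited in the paper, applied to the continuity equation encoded in \eqref{min:occupation}. Testing against $\phi(t,x) = \psi(t)$ first shows that the $t$-marginal of $\mu$ is Lebesgue measure, giving a disintegration $\mu = dt \otimes \mu_t$ with $\mu_t$ on $\domain\times\controls$. Its $\domain$-marginal $\rho_t$ solves a classical continuity equation driven by the average velocity $\bar{\f}(t,x) = \int_{\controls}\! \f(t,x,u)\, d\mu_t(u\mid x)$ with source $\delta_\initial$ at $t=0$ and sink $\mu_\partial$ at $t=\T$. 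Superposition then provides a probability measure $\eta$ on absolutely continuous curves $\gamma$ starting at $\initial$ and ending in $\destination$, with $\gamma'(t) = \bar{\f}(t,\gamma(t))$. Along each such $\gamma$ one reconstructs $\nu^\gamma_t = \mu_t(\,\cdot\mid x = \gamma(t))$ by Borel disintegration, yielding $\mu = \int (\delta_{\gamma(t)} \otimes \nu^\gamma_t)\, dt\, d\eta(\gamma)$. The cost of $(\mu,\mu_\partial)$ thus becomes the $\eta$-average of the Young-measure costs of the pairs $(\gamma,\nu^\gamma_\cdot)$, so there exist admissible $(\gamma,\nu^\gamma_\cdot)$ achieving cost no larger than this average, giving $\minarcs \leq \minoccupation$.

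The main obstacle is this superposition step. Bernard's result is often stated under compactness or bounded-velocity hypotheses, so the work will be in verifying its applicability here, where $\domain$ and $\controls$ need not be compact and only the $L^1$ bound $\int \|\f\|\, d\mu < \infty$ is available. A further technical point is to show that the superposition respects the terminal measure $\mu_\partial$ supported in $\destination$, so that the extracted trajectories satisfy $\gamma(\T) \in \destination$ for $\eta$-almost every $\gamma$, and that the per-curve disintegration yielding $\nu^\gamma_t$ can be carried out in a jointly Borel-measurable way.
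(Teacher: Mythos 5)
Your proposal is correct and follows essentially the same route as the paper: the inequality $\minarcs\leq\mincurves$ via the Dirac embedding $\nu_t=\delta_{u(t)}$, and the equality $\minoccupation=\minarcs$ via a superposition/decomposition of occupation measures into Young-measure--curve pairs plus linearity of the cost. The substantive superposition step you outline (and whose technical obstacles you rightly flag: non-compactness, the terminal marginal $\mu_\partial$, and measurable disintegration) is exactly what the paper outsources wholesale to its cited decomposition theorem rather than proving.
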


To date, the only known %
properties 
ensuring the absence of a relaxation gap for Problems \eqref{min:occupation} and \eqref{min:relaxed}
are the following convexity assumptions 
proposed in \cite{V93}.
Let, for $t\in\R$, $x\in\overline\Omega$, $v\in \R^n$,
\begin{equation*}
    \bar\normalL(t,x,v)=\inf\{ \normalL(t,x,u): 
    f(t,x,u)=v,\;
    u\in \controls\}.
\end{equation*}
Theorem \ref{VINTER_THM} requires these assumptions:
\begin{enumerate}[label=V\arabic*.,ref=V\arabic*,leftmargin=*]
    \item \label{vinter:first}The functions $\bar \normalL$ and $g$ are lower semicontinuous.
    \item $\overline\Omega$ and $\overline X$ are compact sets.
    \item For $(t,x)\in [0,\T]\times\overline\domain$, the sets $f(t,x,U)$ are compact, convex sets, and the graph of the set-valued map $(t,x)\mapsto f(t,x,U)$ over $[0,\T]\times \overline\Omega$ is compact. 
    \item For each $(t,x)\in [0,T]\times \R^n$, the function $v\mapsto\bar \normalL(t,x,v)$ restricted to $v\in f(t,x,U)$ is a convex function.
    \item There exists an admissible trajectory $\bar x\colon [0,\T]\to\overline\Omega$ such that
    \[\int_{0}^{\T}\bar \normalL(t,\bar x(t),\bar x'(t))\,dt +g(\bar x(\T))\]
    is finite.
    \label{vinter:last}
\end{enumerate}

\begin{theorem}[{\cite{V93}}]\label{VINTER_THM}
    Under hypotheses \ref{vinter:first}--\ref{vinter:last}, the infimum $\mincurves(\overline\domain,\overline\destination)$  is achieved by a measurable control $u$, and we have $\mincurves(\overline \Omega,\overline X)=
    \minarcs(\overline \Omega,\overline X)=\minoccupation(\overline \Omega,\overline X)$. %
\end{theorem}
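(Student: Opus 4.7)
By Lemma~\ref{lem:easyineqs} we already know $\minoccupation=\minarcs\leq\mincurves$, so it remains to establish $\mincurves\leq\minarcs$ together with the existence of a classical minimizer. I would follow the three-step strategy of Lewis--Vinter: (i) rewrite \eqref{min:relaxed} as a differential-inclusion problem with integrand $\bar L$; (ii) solve that relaxed problem by the direct method; (iii) realise the optimal trajectory by a classical control via measurable selection.

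For step (i), given any Young measure $(\nu_t)_t$ admissible in \eqref{min:relaxed} with trajectory $\gamma$, hypothesis V1 and the convexity assumption V4 applied to $v\mapsto\bar L(t,\gamma(t),v)$ on $f(t,\gamma(t),U)$ yield by Jensen's inequality
\[\int_0^T\!\int_\controls L(t,\gamma(t),u)\,d\nu_t(u)\,dt\;\geq\;\int_0^T\bar L(t,\gamma(t),\gamma'(t))\,dt,\]
while V3 forces $\gamma'(t)\in f(t,\gamma(t),U)$. Hence $\minarcs$ is bounded below by the value of the differential-inclusion problem
\[\inf_{\gamma}\Bigl\{\int_0^T\bar L(t,\gamma(t),\gamma'(t))\,dt+g(\gamma(T))\Bigr\}\]
taken over absolutely continuous $\gamma\colon[0,\T]\to\overline\Omega$ with $\gamma'\in f(t,\gamma,U)$ a.e., $\gamma(0)=\initial$, $\gamma(\T)\in\overline X$. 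For step (ii), V2 and V3 make the admissible set equicontinuous and uniformly bounded, so a minimizing sequence admits a $C^0$-convergent subsequence whose limit $\gamma^*$ remains admissible by a Mazur/convexity argument using the compactness and convexity of the velocity sets; lower semicontinuity of the integral functional then follows from V1 and V4 through Ioffe's theorem for integral functionals with convex normal integrand, and finiteness of the infimum is guaranteed by V5.

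For step (iii), a measurable selection applied to the set-valued map
\[t\;\longmapsto\;\{u\in U:\ f(t,\gamma^*(t),u)=\gamma^{*\prime}(t),\ L(t,\gamma^*(t),u)=\bar L(t,\gamma^*(t),\gamma^{*\prime}(t))\}\]
produces a measurable control $u^*$ whose classical cost along $\gamma^*$ equals the relaxed cost. This yields a classical minimizer and gives $\mincurves\leq\minarcs$, closing the chain of inequalities. The main obstacle is precisely this final selection step: the fibers $\{u\in U:f(t,\gamma^*(t),u)=\gamma^{*\prime}(t)\}$ are compact by V3, and lower semicontinuity of $\bar L$ (V1) together with this compactness guarantees that the infimum defining $\bar L$ is attained, so the target multifunction is nonempty and closed-valued; verifying its measurability and invoking a Kuratowski--Ryll-Nardzewski or Filippov-type selection is the technical heart of the argument. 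The convexity hypotheses V3 and V4 are exactly what make the Jensen reduction lossless: without V3 the averaged velocity need not be attainable by a single control value, and without V4 the passage from the Young-measure cost to $\bar L(t,\gamma,\gamma')$ would be a strict inequality.
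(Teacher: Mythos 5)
The paper does not actually prove Theorem~\ref{VINTER_THM}: it is quoted from \cite{V93} as a known result, so there is no in-paper argument to compare yours against line by line. That said, your sketch follows the classical Lewis--Vinter route (reduction to a differential inclusion with integrand $\bar L$, direct method, measurable selection), and steps (i) and (ii) are sound: the Jensen reduction is exactly what the convexity and compactness hypotheses are for, and the direct-method existence argument is standard under V1--V3 and V5.

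The genuine gap is in step (iii). First, V3 asserts compactness of the images $f(t,x,\controls)$ and of the graph of $(t,x)\mapsto f(t,x,\controls)$; it does \emph{not} give compactness of the fibers $\{u\in\controls:\ f(t,x,u)=v\}$ inside $\controls$ (take $\controls=\R$ and $f(t,x,u)=\sin u$: compact image, non-compact fibers), and $\controls$ itself is not assumed compact in the paper's hypotheses. Second, \ref{vinter:first} asserts lower semicontinuity of $\bar L$, i.e.\ of the value function $(t,x,v)\mapsto\inf_u\{\cdots\}$, which says nothing about lower semicontinuity of $L$ in $u$; so even on a compact fiber the infimum defining $\bar L$ need not be attained. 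Hence your claim that ``the infimum defining $\bar L$ is attained'' does not follow from the stated hypotheses, and without attainment the selection multifunction can be empty for a.e.\ $t$, which breaks both the asserted existence of a minimizing measurable control and the exact equality of costs along $\gamma^*$. This is precisely why results of this type are naturally stated at the level of the differential-inclusion or measure formulations; recovering an exact minimizing control requires either extra regularity (e.g.\ $\controls$ compact and $f,L$ continuous, so that a Filippov-type lemma applies) or an $\varepsilon$-selection plus a limiting argument, which yields the equality of infima but not a minimizer. You should either make the attainment hypothesis explicit or split the argument accordingly.
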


However, these conditions are far from necessary, as can be easily seen in the following example.

\begin{example} Take $T=1$, $\domain=\destination=\R$, $f(t,x,u)= u$, $U = [-1,1]$, $L(t,x,u) = (u^2-1)^2 + x^2$, $\initial = 0$. In this case we have $\minoccupation=\minarcs=\mincurves=0$, while the Lagrangian $L$ is not convex w.r.t. $u$. 
Indeed, a minimizing occupation measure is $\mu=\tfrac12\delta_0(x)\otimes (\delta_{+1}(u)+\delta_{-1}(u))$, which gives $\minoccupation=0$. On the other hand, a minimizing sequence of measurable controls can be given by $u_N(t)=(-1)^{\lfloor Nt\rfloor}$, where $\lfloor \cdot\rfloor$ is the 
floor function and $N\in\mathbb N$; this gives $\mincurves=0$.
\end{example}

\subsection{Contributions and outline}
In this paper, we propose sufficient conditions for $\minoccupation=\minarcs=\mincurves$, which amount to the following cases:
\begin{enumerate}
    \item\label{it:firstcontrib} The infima taken over the interior and over the closure of the  domain coincide for the Young measure relaxation; see Theorem \ref{cor:int} in Section \ref{sec:interiorboundary}.
    \item\label{it:secondcontrib} The control structure satisfies an inward pointing condition at the boundary of the domain; see Theorem \ref{cor:IPC} in Section \ref{sec:inwardpointing}.
\end{enumerate}

Condition \ref{it:firstcontrib} is particularly appealing from a practical perspective as it amounts to the infimum of the problem being stable under an infinitesimal perturbation of the domain's boundary.

The application of the results of Section~\ref{sec:interiorboundary} allows us to obtain in Section~\ref{sec:gapbound}  explicit estimates of the gap in the case where the sufficient conditions are not satisfied. They are obtained by replacing the sets $\Omega$ and $X$ by suitable inner approximations. This is interesting in practice since the corresponding upper bound can be computed via the moment-sums-of-squares algorithm from~\cite{OCP08}.

\section{Interactions between the interior and the boundary of the domain}
\label{sec:interiorboundary}

In this section we will be concerned with Filippov--Wa\v zewsky-type results in a state-constrained setting. These have a long history; see for example \cite{frankowska2000filippov,
CKR22,
BETTIOL_BRESSAN,BETTIOL12}. 
In this context, the absence of a relaxation gap necessitates certain refined properties of the controlled vector field near the boundary of the constraint set $\Omega$.

    For $x\in\R^n$ and $r>0$, denote by $\overline B(x,r)$ the closed ball of radius $r$ centered at $\xi_0$.
    Denote also by $\dist_H$ the Hausdorff distance between two sets.
    Let $L^1([0,\T];\R_+)$ be the set of integrable, nonnegative functions
    defined on $[0,\T]$.

Consider the following assumptions on a set-valued function $F\colon\R\times\R^n\rightrightarrows\R^n$:
\begin{enumerate}[label=FW\arabic*.,ref=FW\arabic*,leftmargin=*]
    \item \label{FW:first} (Boundedness) There is $\lambda\in L^1([0,\T];\R_+)$ with 
    \[\sup_{v\in F(t,x)}\|v\|\leq \lambda(t)(1+\|x\|),\qquad\text{a.e.~$[0,\T]$.}\]
    \item (Local Lipschitzity) Suppose that for all $R>0$ there is $k_R\in L^1([0,T];\R_+)$ such that, for almost all $t\in[0,\T]$ and all $x,y\in B(0,R)$,
    \[\dist_H(F(t,x),F(t,y))\leq k_R(t)\|x-y\|.\]
    \label{FW:last}
\end{enumerate}
\begin{theorem}[No gap on open domains, and on some closed domains]
    \label{cor:int}
    Assume that $\domain$ and $X\subseteq \domain$ are open.
    Pick also a control structure $\f$, an initial point $\initial\in\domain$, a measurable Lagrangian density $\normalL(t,x,u)$ continuous in $x$, and a continuous terminal cost function $\g$.
    Define $\F$ by
    \begin{equation}\label{eq:defUsefulF}
        \F(t,x)=\{(f(t,x,u),L(t,x,u)):u\in \controls\},
    \end{equation}
    for $t\in[0,\T]$ and $x\in\domain$,
    and assume that it verifies \ref{FW:first}--\ref{FW:last}.
    Then we have:
    \[ \mincurves(\domain,\destination)= \minarcs(\domain,\destination)= \minoccupation(\domain,\destination).\]
    
    Additionally, if  $\minarcs(\domain,\destination)=\minarcs(\overline\domain,\destination)$ holds then $\mincurves(\overline\domain,\destination)=\minarcs(\overline\domain,\destination)=\minoccupation(\overline\domain,\destination)$. 
    
    Finally, if $\minarcs(\domain,\destination)=\minarcs(\overline\domain,\overline\destination)$ holds, then 
     $\mincurves(\overline\domain, \overline\destination)= \minarcs(\overline\domain,\overline\destination) =\minoccupation(\overline\domain,\overline\destination)$.
\end{theorem}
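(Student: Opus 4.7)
The plan is to lean on Lemma~\ref{lem:easyineqs}, which already gives $\minoccupation=\minarcs\leq\mincurves$ in each of the three settings appearing in the theorem, so the entire content of the statement reduces to upper bounds of the form $\mincurves\leq\minarcs$. The first (open) claim is the only one that requires genuine work; the two conditional claims will then follow from it by elementary monotonicity of the three problems in the state constraint.

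For the first claim I would fix $\varepsilon>0$ and pick a Young measure $(\nu_t)_{t\in[0,\T]}$ with associated trajectory $\gamma\colon[0,\T]\to\domain$ such that
\[
\int_0^\T\!\!\int_\controls \normalL(t,\gamma(t),u)\,d\nu_t(u)\,dt + g(\gamma(\T)) \leq \minarcs(\domain,X)+\varepsilon.
\]
The theorem has been set up so that $\f$ and $\normalL$ are already packaged into a single multifunction $F$ through \eqref{eq:defUsefulF}, which suggests working on the augmented state $(x,r)\in\R^n\times\R$ in which $r$ accumulates the running Lagrangian cost. Setting $r(t)=\int_0^t\!\!\int_\controls \normalL(s,\gamma(s),u)\,d\nu_s(u)\,ds$, the pair $\tilde\gamma=(\gamma,r)$ solves the convexified differential inclusion $\dot{\tilde\gamma}(t)\in\clconv F(t,\gamma(t))$. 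I would then invoke the Filippov--Wa\v zewski theorem from \cite{frankowska2000filippov}: the hypotheses \ref{FW:first}--\ref{FW:last} on $F$ (regarded as a multifunction on $\R^{n+1}$ that is constant in the extra coordinate) allow us, for any $\delta>0$, to find a classical trajectory $(\eta,s)$ of $\dot{(\eta,s)}\in F(t,\eta)$ with $(\eta(0),s(0))=(\initial,0)$ and $\|(\eta,s)-(\gamma,r)\|_\infty\leq\delta$, driven by a measurable control $u\colon[0,\T]\to\controls$. Since $\gamma([0,\T])$ is a compact subset of the open set $\domain$ and $\gamma(\T)\in X$ with $X$ open, choosing $\delta$ small enough forces $\eta([0,\T])\subset\domain$ and $\eta(\T)\in X$. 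Combining the bound $|s(\T)-r(\T)|\leq\delta$ with the continuity of $g$ at $\gamma(\T)$ makes the total cost of $u$ at most $\minarcs(\domain,X)+\varepsilon+\omega(\delta)$ for some $\omega(\delta)\to 0$; letting $\delta$ and then $\varepsilon$ tend to $0$ yields $\mincurves(\domain,X)\leq\minarcs(\domain,X)$.

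For the second claim, under the hypothesis $\minarcs(\domain,X)=\minarcs(\overline\domain,X)$, I would chain
\[
\minarcs(\overline\domain,X)\leq\mincurves(\overline\domain,X)\leq\mincurves(\domain,X)=\minarcs(\domain,X)=\minarcs(\overline\domain,X),
\]
using Lemma~\ref{lem:easyineqs}, the monotonicity of $\mincurves$ under enlarging the state constraint from $\domain$ to $\overline\domain$, the first claim just established, and the hypothesis. Combining with Lemma~\ref{lem:easyineqs} once more yields the announced triple equality. The third claim is obtained by exactly the same chain with $X$ replaced by $\overline X$ throughout.

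The main technical obstacle is the correct deployment of Filippov--Wa\v zewski on the augmented system: one must confirm that the growth and Lipschitz conditions \ref{FW:first}--\ref{FW:last}, stated for $F$ with values in $\R^{n+1}$, match the hypotheses of the version in \cite{frankowska2000filippov}, and that the approximating trajectory can indeed be realised as the integral curve of some measurable control taking values in $\controls$ (this is a standard measurable selection step, but it is where the bundling of $\f$ and $\normalL$ into $F$ must be undone). Once this lifting is in place, the preservation of the state constraint under the sup-norm approximation is a short compactness argument that crucially uses the openness of $\domain$ and $X$, and the two remaining claims are bookkeeping.
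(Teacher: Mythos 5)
Your proposal is correct and follows essentially the same route as the paper: augment the state with the running cost, apply Filippov--Wa\v zewski to the convexified inclusion for $F$ from \eqref{eq:defUsefulF}, recover a control by measurable selection, use openness of $\domain$ and $\destination$ to preserve the constraints under the sup-norm approximation, and settle the two conditional claims by monotonicity chains. One tiny bookkeeping slip: for the third claim the chain should stay anchored at the open pair, i.e.\ $\minarcs(\overline\domain,\overline\destination)\leq\mincurves(\overline\domain,\overline\destination)\leq\mincurves(\domain,\destination)=\minarcs(\domain,\destination)=\minarcs(\overline\domain,\overline\destination)$, rather than ``replacing $\destination$ by $\overline\destination$ throughout,'' since the first claim does not apply to the non-open target $\overline\destination$ and the stated hypothesis equates $\minarcs(\domain,\destination)$ with $\minarcs(\overline\domain,\overline\destination)$.
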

To prove Theorem \ref{cor:int}, we recall the following well-known result.
\begin{theorem}
    [Filippov-Wa\v zewski~{\cite[Th.~2.3]{frankowska2000filippov}}]
    \label{thm:FW}
    Assume that \ref{FW:first}--\ref{FW:last} are verified.
    Then, for every $\varepsilon>0$ and every absolutely continuous curve $x\colon[0,T]\to\R^n$ satisfying $x'(t)\in \operatorname{conv}F(t,x(t))$ for almost every $t\in[0,T]$ there is a curve $y\colon[0,T]\to\R^n$ satisfying $y'(t)\in F(t,y(t))$ for almost every $t\in[0,T]$, $x(0)=y(0)$, and
    \[\|x(t)-y(t)\|\leq \varepsilon,\qquad t\in [0,T].\]
\end{theorem}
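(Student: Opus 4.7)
The plan is to prove the Filippov--Wa\v zewski theorem via a classical ``chattering'' (or ``relaxation'') argument combined with a Gronwall-type stability estimate. The starting point is to represent the convexified velocity $x'(t) \in \conv F(t, x(t))$ as an explicit convex combination of selections from $F(t, x(t))$: by Carathéodory's theorem in $\R^n$, every point of $\conv F(t, x(t))$ is a convex combination of at most $n+1$ points of $F(t, x(t))$. Applying a measurable selection theorem (Kuratowski--Ryll-Nardzewski, using measurability of the set-valued map $t \mapsto F(t, x(t))$ inherited from \ref{FW:first}--\ref{FW:last}), one extracts measurable functions $\lambda_1(t), \ldots, \lambda_{n+1}(t) \geq 0$ summing to $1$ and measurable selections $v_1(t), \ldots, v_{n+1}(t) \in F(t, x(t))$ such that $x'(t) = \sum_i \lambda_i(t) v_i(t)$ for almost every $t \in [0,T]$.

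Next, I would construct $y$ by chattering between these selections. Fix a large integer $N$, partition $[0, T]$ into subintervals $I_j = [jT/N, (j+1)T/N]$ of length $h = T/N$, and within each $I_j$ pick a Lebesgue point $\tau_j$ of the $\lambda_i$'s and $v_i$'s. Subdivide $I_j$ into consecutive sub-intervals $I_j^i$ of length $h \cdot \lambda_i(\tau_j)$, and define $y$ recursively by $y(0) = x(0)$ and $y'(s) = \tilde v_i(s)$ for $s \in I_j^i$, where $\tilde v_i(s) \in F(s, y(s))$ is an approximate selection close to $v_i(\tau_j)$, obtained by using \ref{FW:last} to transport a selection of $F(s, x(s))$ near $v_i(\tau_j)$ onto $F(s, y(s))$. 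The key property of this construction is that the average velocity of $y$ over each $I_j$ matches $\sum_i \lambda_i(\tau_j) v_i(\tau_j) \approx x'(\tau_j)$, so $y$ tracks $x$ up to an error that shrinks with $h$.

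To quantify this, write $y(t) - x(t) = \int_0^t (y'(s) - x'(s))\, ds$ and split the integral into the complete subintervals $I_j \subseteq [0, t]$ plus a remainder. On each complete $I_j$, cancellation between the chattered selection and the convex combination produces an $O(h)$ error modulated by the bound $\lambda(t)(1+\|x(t)\|)$ from \ref{FW:first} and the Lebesgue-point approximation of the $v_i$. The Lipschitz estimate $\dist_H(F(s, y(s)), F(s, x(s))) \leq k_R(s)\|y(s) - x(s)\|$ from \ref{FW:last} then yields an integral inequality of the form
\begin{equation*}
\|y(t) - x(t)\| \leq \omega(h) + \int_0^t k_R(s)\, \|y(s) - x(s)\|\, ds,
\end{equation*}
with $\omega(h) \to 0$ as $h \to 0$. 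Gronwall's lemma then gives $\|y(t) - x(t)\| \leq \omega(h)\exp\bigl(\int_0^T k_R(s)\, ds\bigr)$, and a sufficiently small $h$ drives the right-hand side below $\varepsilon$.

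The main obstacle is the simultaneous management of three interlocking estimates: (i) the measurable-selection step producing the Carathéodory decomposition $x' = \sum_i \lambda_i v_i$; (ii) the construction of approximating selections $\tilde v_i(s) \in F(s, y(s))$ that remain close to $v_i(\tau_j)$ despite $y(s) \neq x(\tau_j)$; and (iii) an a priori bound on $\|y(t)\|$ on $[0, T]$ (so that a single radius $R$ in \ref{FW:last} applies throughout), which itself depends on $y$ being close to $x$ as in the conclusion. Closing this circular dependence---so that the single parameter $h$ can be chosen to beat an arbitrary $\varepsilon$---is the delicate part, typically handled by first establishing the estimate on a short interval where $\|y - x\|$ is automatically controlled and then bootstrapping across $[0,T]$.
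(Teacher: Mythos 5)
The paper does not actually prove Theorem~\ref{thm:FW}: it is quoted verbatim from Frankowska--Rampazzo \cite{frankowska2000filippov}, so there is no in-paper argument to compare against. Your sketch follows the classical route (Carath\'eodory decomposition, chattering, Gronwall), which is indeed the right family of ideas, but as written it has two genuine gaps.

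First, the existence of $y$ is assumed rather than proved. You define $y$ piecewise by $y'(s)=\tilde v_i(s)$ with $\tilde v_i(s)\in F(s,y(s))$ ``obtained by transporting'' $v_i(\tau_j)$ onto $F(s,y(s))$. This is a differential inclusion whose right-hand side depends on the unknown $y$, constrained so that the selection stays within $\dist_H$-distance of a prescribed vector; asserting that such a solution exists is essentially Filippov's approximation theorem, which is the analytic core of the whole result (the nearest-point selection in $F(s,y)$ is in general neither unique nor continuous, so one cannot just ``solve the ODE''). The standard way around this is to first build a measurable selection $w(t)\in F(t,x(t))$ along the \emph{known} curve $x$, so that $\tilde y(t)=x(0)+\int_0^t w(s)\,ds$ is defined by a plain integral, arrange by chattering that $\sup_t\|\int_0^t(w-x')\,ds\|$ is small, and only then invoke Filippov's theorem once: the defect $\dist(\tilde y'(t),F(t,\tilde y(t)))\le k_R(t)\|\tilde y(t)-x(t)\|$ is integrably small, and Gronwall upgrades $\tilde y$ to a genuine trajectory $y$ nearby.

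Second, the chattering estimate is not closed. Picking a single Lebesgue point $\tau_j$ per interval gives only pointwise control as $h\to 0$; since $\lambda_i$ and $v_i$ are merely measurable and only dominated by $\lambda(t)(1+\|x(t)\|)$, the error over the union of the intervals need not be $\omega(h)$ uniformly. One needs either Scorza--Dragoni/Lusin to restrict to a compact set of times where the data are uniformly continuous, or, more cleanly, the Lyapunov convexity theorem to choose measurable sets $E^i_j\subset I_j$ with $\int_{E^i_j}v_i\,ds=\int_{I_j}\lambda_i v_i\,ds$ exactly, which annihilates the error on complete subintervals. Finally, the circularity you flag in point (iii) is not actually present: hypothesis \ref{FW:first} together with Gronwall bounds $\|y(t)\|$ a priori for \emph{every} trajectory of $y'\in F(t,y)$ with $y(0)=x(0)$, independently of its distance to $x$, so the radius $R$ in \ref{FW:last} can be fixed once and for all and no bootstrap over short intervals is needed.
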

\begin{proof}[Proof of Theorem \ref{cor:int}]
    We know from Lemma \ref{lem:easyineqs} that $\mincurves(\domain,\destination)\geq\minarcs(\domain,\destination)=\minoccupation(\domain,\destination)$. Let us show that $\mincurves(\domain,\destination)\leq\minarcs(\domain,\destination)$. Let $\varepsilon>0$.
    Take a contender $(\nu_t)_{t\in[0,\T]}$ in \eqref{min:relaxed}. Let $\gamma$ be the absolutely continuous curve satisfying $\gamma(0)=\initial$ and $\gamma'(t)=\int_\controls f(t,\gamma(t),u)\,d\nu_t(u)$ for almost every $t\in[0,\T]$. 
    By the classical lift extension procedure detailed in \nulllagrangianlemma{}, the system in \eqref{min:relaxed} is equivalent to a system with null Lagrangian in one dimension higher.
    Using the notations from \nulllagrangianlemma{} and its proof, translate the problem as in the lemma from a situation having data  $P=(\domain,\initial,\destination,\T,\controls,\normalL,\g,\f)$ to a new situation with data $\tilde P=(\tilde\domain,\tilde{\mathrm{x}}_0,\tilde \destination,\T,\controls,0,\tilde\g,\tilde\f)$ with $\tilde\domain=\domain\times\R$, $\tilde \destination=\destination\times\R$, $\tilde \normalL=0$, $\tilde g(x)=x_{n+1}+\g(x)$, $\tilde{\mathbf x}_0=(\initial,0)$, and
    $\tilde\f(t,x,u)=(\f(t,x,u),\normalL(t,x,u))$.
    Let $\tilde\gamma$ be the curve corresponding to $\gamma$, which is given by 
    \[\tilde\gamma(t)=(\gamma(t),\int_0^t\normalL(s,\gamma(s),u(s))\,ds).\]
    It follows that $\tilde\gamma$ satisfies 
    \[\tilde\gamma'(t)=\left(\int_\controls f(t,\gamma(t),u)d\nu_t(u),\int_\controls L(t,\gamma(t),u)d\nu_t(u)\right)\in \operatorname{conv}\F(t,\tilde\gamma(t))\] 
    for almost every $t\in[0,\T]$. %
    Apply Theorem \ref{thm:FW} with $x=\tilde\gamma$ to obtain a curve $y$ satisfying $\|\tilde\gamma(t)-y(t)\|\leq \varepsilon$ and $y'(t)\in  F(t,y(t))$. Project $y$ into the first $n$ coordinates to get a curve $\eta(t)=\pi(y(t))$ that satisfies $\eta'(t)\in \pi( \F(t,y(t)))=f(t,\eta(t),\controls)$ for almost every $t\in[0,\T]$ verifying
    \begin{equation*}
        \|\gamma(t)-\eta(t)\|=\|\pi(\tilde\gamma(t))-\pi(y(t))\| 
        \leq \|\tilde\gamma(t)-y(t)\|
        \leq \varepsilon.
    \end{equation*}
    Observe that, since $X$ is open and $\gamma(T)\in X$, this inequality means that, if we take $\varepsilon$ small enough that the ball $B(\varepsilon,\gamma(T))$ be contained in $X$, then also $\eta(T)\in X$.
    Applying the Measurable Selection Theorem \cite{aubinfrankowska} to get a measurable function $u(t)$ with $\eta'(t)=f(t,\eta(t),u(t))$ for almost every $t\in[0,\T]$,
    \begin{align*}
        &\left|\int_0^\T \int_\controls \normalL(t,\gamma(t),u)\,d\nu_t(u)\,dt-\int_0^\T\normalL(t,\eta(t),u(t))\,dt\right| \\
         &=\left|\tilde\gamma_{n+1}(\T)-\tilde\gamma_{n+1}(0)-y_{n+1}(\T)+y_{n+1}(0)\right|\\
         &=\left|\tilde\gamma_{n+1}(\T)-y_{n+1}(\T)\right|\leq \|\tilde\gamma(\T)-y(\T)\|\leq \varepsilon.
    \end{align*}
    Here, we have used the fact that $\tilde\gamma_{n+1}(0)=0=y_{n+1}(0)$, according to the equivalence explained in the proof of \nulllagrangianlemma. This shows that $|\mincurves(\domain,\destination)-\minarcs(\domain,\destination)|\leq \varepsilon$. Since this is true for arbitrary $\varepsilon>0$, this proves the first claim of the corollary.

    For the other statements of the corollary, we have, when $\minarcs(\domain,\destination)=\minarcs(\overline\domain,\destination)$,
\begin{equation*}
    \minarcs(\domain,\destination) =\minarcs(\overline\domain,\destination)\leq \mincurves(\overline\Omega,\destination)
    \leq \mincurves(\domain,\destination) = \minarcs(\domain,\destination),
\end{equation*}
which, together with Lemma \ref{lem:easyineqs}, gives the first set of equalities. Similarly, when 
$\minarcs(\domain,\destination)=\minarcs(\overline\domain,\overline\destination)$, we get the other set of equalities from
\begin{equation*}
    \minarcs(\domain,\destination) =\minarcs(\overline\Omega,\overline\destination)\leq \mincurves(\overline\domain,\overline\destination) 
    \leq \mincurves(\domain,\destination) = \minarcs(\domain,\destination). \qedhere
\end{equation*}
\end{proof}

\section{Inward pointing conditions}
\label{sec:inwardpointing}

Starting with \cite{frankowska2000filippov}, a few papers (see e.g.~\cite{CKR22,bib6, BETTIOL_BRESSAN,BETTIOL12,franceschi2020essential}) exploited a (Soner type) \emph{inward pointing condition} on the available vector fields on the boundary to obtain a Filippov--Wa\v zewsky type of result in the presence of domain constraints. The inward pointing condition  is also known as ``the absence of characteristic points'' in the context of sub-Riemannian geometry~\cite{franceschi2020essential}.

    A set-valued map $\mathscr I\colon\partial\Omega\to\R^n$ is called a \emph{uniformly hypertangent conical field} if there exist $\varepsilon>0$ and $\delta>0$, such that for every $x_0\in \partial\Omega$, $\mathscr I(x_0)$ is a closed convex cone, and for any $x\in B(x_0,\delta)\cap\overline\Omega$, and any vector $v\in\mathscr I(x_0)$,
    \begin{multline*}
        x+[0,\varepsilon]B(v,\varepsilon)\coloneqq\\
        \{x+\nu u:u\in B(v,\varepsilon),\;0\leq\nu\leq\varepsilon\} \subset\overline\Omega.
    \end{multline*}

The following are the hypotheses required by Theorem \ref{thm:frankowskarampazzo}:
\begin{enumerate}[label=H\arabic*.,ref=H\arabic*]
    \item\label{IPC:first} (Regularity of $F$) \label{IPC:lipschitzity}There exists a constant $K\geq 0$ such that, for every $0<d\leq\T$ and every Lipschitz map $x\colon[0,+\infty)\to\R^n$,
    \begin{equation*} 
    \hspace{-2mm}\int_{0}^{\T-d}\hspace{-4mm}\operatorname{dist}_H(F(s,x(s)),F(s+d,x(s)))\,ds
    \leq Kd.
    \end{equation*}
    Here, $\operatorname{dist}_H$ denotes the Hausdorff metric.
    \item\label{IPC:ipc} (Inward pointing condition) There exist $\eta>0$ and an uniformly hypertangent conical field $\mathscr I$ such that
    \[F(t,x)\cap \mathscr I(x)\cap \{v\in\R^n:|v|\geq \eta\}\neq \emptyset\]
    for all $t\in[0,\T]$ and all $x\in \partial \Omega$.
    \label{IPC:last}
\end{enumerate}
\begin{remark}[Simpler version of \ref{IPC:lipschitzity}]
    If $F(t,x)$ has compact images and is Lipschitz with respect to the Hausdorff metric in $t$, then hypothesis \ref{IPC:lipschitzity} is satisfied; indeed, if the Lipschitz constant were $C>0$, we would have
    \begin{multline*}
        \int_0^{T-d}\dist_H(F(s,x(s)),F(s+d,x(s)))\,ds\\
        \leq\int_0^{T-d} C|s-(s+d)|\,ds=Cd(T-d),
    \end{multline*}
    so taking $K=CT$ we have the condition in \ref{IPC:lipschitzity}. Also, \ref{IPC:lipschitzity} is automatically satisfied when $F$ is autonomous, i.e., when $F$ is independent of time, $F(t,x)=F(x)$.
\end{remark}
\begin{remark}[Simpler versions of \ref{IPC:ipc}]\label{rmk:IPCsimple}
    If $F(t,x)=F(x)$ does not depend on time, $\Omega$ is bounded, and for every $x_0\in\partial \Omega$ one has
    \[F(x_0)\cap \operatorname{interior}(C_{\overline\Omega}(x_0))\neq \emptyset,\] where $C_{\overline\Omega}(x_0)$ denotes Clarke's tangent cone
    at $x_0$, then hypothesis \ref{IPC:ipc} is verified; see \cite[Lem.~4.1]{frankowska2000filippov}.
\end{remark}

\begin{theorem}
 \label{cor:IPC}
 In the context of problems \eqref{min:orig}--\eqref{min:occupation}, let $\domain$ and $\destination$ be open sets, and assume that $\g$ is a continuous function.
  Define $F$ as in \eqref{eq:defUsefulF}. 
 Assume that \ref{IPC:first}--\ref{IPC:last} are verified. %
 Then we have:
 \[\mincurves(\overline\domain,\destination)=\minarcs(\overline\domain,\destination)=\minoccupation(\overline\domain,\destination).\]
 If $\overline \destination=\overline\domain$, then also 
 \begin{equation}\label{eq:mysterious}
 \mincurves(\overline\domain,\overline \destination)=\minarcs(\overline\domain,\overline\destination)=\minoccupation(\overline\domain,\overline\destination).
 \end{equation}
\end{theorem}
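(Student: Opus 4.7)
The plan is to mirror the proof of Theorem \ref{cor:int}, replacing the ordinary Filippov--Wa\v zewsky result (Theorem \ref{thm:FW}) with the state-constrained version, Theorem \ref{thm:frankowskarampazzo}, which under \ref{IPC:first}--\ref{IPC:last} produces an approximating classical trajectory that \emph{remains in} $\overline\Omega$. By Lemma \ref{lem:easyineqs} we already have $\minoccupation(\overline\domain,\destination)=\minarcs(\overline\domain,\destination)\leq\mincurves(\overline\domain,\destination)$, so the whole task is to show the reverse inequality $\mincurves(\overline\domain,\destination)\leq\minarcs(\overline\domain,\destination)$. Fix $\varepsilon>0$ and a Young-measure contender $(\nu_t)_{t\in[0,\T]}$ in \eqref{min:relaxed} whose integral curve $\gamma$ stays in $\overline\domain$ and ends at $\gamma(\T)\in\destination$.

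Following the proof of Theorem \ref{cor:int}, I would first lift via \nulllagrangianlemma{} to the null-Lagrangian data $\tilde P=(\tilde\domain,\tilde{\mathbf x}_0,\tilde\destination,\T,\controls,0,\tilde\g,\tilde f)$ on $\tilde\domain=\domain\times\R$, so that the derivative of the lifted curve $\tilde\gamma$ lies in $\operatorname{conv}\F(t,\tilde\gamma(t))$ for $\F$ defined by \eqref{eq:defUsefulF}. The key technical verification is that hypotheses \ref{IPC:first}--\ref{IPC:last} transfer to $\F$ on $\overline{\tilde\domain}=\overline\domain\times\R$: for the inward-pointing condition I would take the lifted uniformly hypertangent field $\tilde{\mathscr I}(x,z)=\mathscr I(x)\times\R$, a closed convex cone whose hypertangency constants $\varepsilon,\delta$ are inherited from $\mathscr I$ because the $\R$-direction is unconstrained; and any $u\in\controls$ realising the inward-pointing vector at $x\in\partial\domain$ yields $(f(t,x,u),\normalL(t,x,u))\in \F(t,(x,z))\cap\tilde{\mathscr I}(x,z)$ of norm at least $\eta$. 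Hypothesis \ref{IPC:lipschitzity} passes through directly since only the first $n$ components of $\F$ influence the Hausdorff distance in a way that might depend on time in a non-trivial manner.

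Applying Theorem \ref{thm:frankowskarampazzo} to $\tilde\gamma$ then yields $y\colon[0,\T]\to\overline{\tilde\domain}$ with $y'(t)\in\F(t,y(t))$, $y(0)=\tilde\gamma(0)$, and $\|\tilde\gamma(t)-y(t)\|\leq\varepsilon$ for all $t$. The projection $\eta=\projbase(y)$ satisfies $\eta'(t)\in f(t,\eta(t),\controls)$, stays in $\overline\domain$, and is $\varepsilon$-close to $\gamma$; because $\destination$ is open and $\gamma(\T)\in\destination$, for $\varepsilon$ small enough also $\eta(\T)\in\destination$. The Measurable Selection Theorem supplies a measurable control $u(t)$ realising $\eta$, and the exact cost comparison from the final display in the proof of Theorem \ref{cor:int} bounds the cost difference by $\varepsilon$, proving the first set of equalities. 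For \eqref{eq:mysterious} the same argument works except that the endpoint constraint becomes automatic: the state constraint forces $\eta(\T)\in\overline\domain=\overline\destination$ for every $\varepsilon$, so no openness of $\destination$ is needed. The principal obstacle I expect is the transfer of the inward-pointing constants to the lifted problem so that they do not degenerate; the unconstrained $\R$-factor makes this largely formal, but one must check that $\normalL(t,x,u)$ remains controlled at the hypertangency-realising $u$ in order for the quantitative constants $\varepsilon,\delta,\eta$ to lift uniformly in $z$.
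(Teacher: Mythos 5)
Your proposal is correct and follows essentially the same route as the paper: replace Theorem \ref{thm:FW} by the state-constrained Theorem \ref{thm:frankowskarampazzo} in the argument of Theorem \ref{cor:int}, use openness of $\destination$ to land the endpoint for the first set of equalities, and observe that the endpoint constraint is automatic when $\overline\destination=\overline\domain$. The only divergence is that you verify the transfer of \ref{IPC:first}--\ref{IPC:last} to the lifted problem via $\tilde{\mathscr I}(x,z)=\mathscr I(x)\times\R$, whereas the paper sidesteps this by assuming the hypotheses directly for the lifted map $F$ of \eqref{eq:defUsefulF}; your extra check is sound and, if anything, makes the hypotheses easier to interpret.
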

\begin{remark} 
    Equality \eqref{eq:mysterious} allows us to apply the  algorithm of \cite{OCP08} in order to obtain $\mincurves(\overline\domain,\overline \destination)$ via the computation of $\minoccupation(\overline\domain,\overline \destination)$ in the case where the system has no endpoint constraint (i.e. $\overline \destination= \overline \domain$). The case where $\destination$ is a closed set strictly contained in $\domain$ may possess a relaxation gap, linked to normality properties of trajectories for the Pontryagin Maximum Principle (see e.g. \cite{palladino2014minimizers}). %

\end{remark}
\begin{remark}
    For an example in which condition \eqref{IPC:ipc} does not hold, giving rise to a relaxation gap, we refer the reader to \cite{OurCounterexamplePaper}.
\end{remark}

As we shall see, Theorem \ref{cor:IPC} follows from the following result.

\begin{theorem}[{\cite[Th.~4.2]{frankowska2000filippov}}]
\label{thm:frankowskarampazzo}
    Assume \ref{IPC:first}--\ref{IPC:last}. Let $\domain\subset\R^n$ be an open set. There is a constant $C>0$ such that the following is true.
    Consider %
    a number $\nu>0$ and an absolutely continuous curve $x\colon[0,\T]\to\overline\Omega$ with $x'(t)\in\operatorname{conv} F(t,x(t))$ for almost every $t$.
    Then there exists a trajectory $y\colon[0,\T]\to\overline\Omega$ such that $y'(t)\in F(t,y(t))$ for almost every $t$, $x(0)=y(0)$, and
    \[\|y(t)-x(t)\|\leq C\nu(1+\T)%
    \qquad \text{for all}\qquad t\in[0,\T].\]
\end{theorem}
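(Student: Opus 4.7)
The plan is to derive Theorem~\ref{thm:frankowskarampazzo} by combining the unconstrained Filippov--Wa\v zewski theorem (Theorem~\ref{thm:FW}) with a boundary repair procedure powered by the inward pointing condition \ref{IPC:ipc}. First, localize the dynamics to a bounded tube around $x([0,\T])$ so that \ref{FW:first}--\ref{FW:last} apply, and invoke Theorem~\ref{thm:FW} to produce an absolutely continuous curve $\tilde y\colon[0,\T]\to\R^n$ with $\tilde y(0)=x(0)$, $\tilde y'(t)\in F(t,\tilde y(t))$ almost everywhere, and $\|\tilde y-x\|_\infty\leq\varepsilon$, for a parameter $\varepsilon$ proportional to $\nu$ to be fixed below. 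The curve $\tilde y$ may leave $\overline\Omega$, but it stays within distance $\varepsilon$ of $\overline\Omega$.

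The central construction repairs $\tilde y$ into a curve $y\colon[0,\T]\to\overline\Omega$ satisfying $y'(t)\in F(t,y(t))$ a.e.\ and $\|y-\tilde y\|_\infty\leq C'\varepsilon$. At those times when $\tilde y(t)$ lies within $\delta$ of $\partial\Omega$ or outside $\overline\Omega$, select a boundary point $x_0$ close to $\tilde y(t)$ and use \ref{IPC:ipc} combined with the Measurable Selection Theorem (as in the proof of Theorem~\ref{cor:int}) to produce a measurable velocity $v(s)\in F(s,\cdot)\cap\mathscr I(x_0)$ with $|v(s)|\geq\eta$. Substituting $v$ into the dynamics on short corrective subintervals drives the trajectory inward at rate at least $\eta$ by the uniform hypertangent property of $\mathscr I$, healing excursions of depth $\varepsilon$ in corrective time of order $\varepsilon/\eta$; the resulting instantaneous deviation from $\tilde y$ is bounded by the corrective duration times a uniform bound on $|F|$ over the tube. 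An iterative scheme---correct, reapproximate by Theorem~\ref{thm:FW} on the modified curve, iterate---produces $y$ in the limit.

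Setting $h(t)=\|y(t)-x(t)\|$, the final estimate follows from Gronwall. Between corrective episodes $h$ grows under the state-Lipschitz behavior of $F$ on the tube, while the correction introduces a perturbation whose total contribution is controlled by the time-regularity \ref{IPC:lipschitzity} (which absorbs the reparametrization error of the repair). The uniform constants $\eta$, $\delta$, $K$ together with the tube-velocity bound combine to yield $h(t)\leq C\nu(1+\T)$, with $C$ depending only on these quantities.

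The principal obstacle is guaranteeing the linear-in-$\varepsilon$ scaling of the cumulative repair error while preserving absolute continuity and the strict differential inclusion $y'(t)\in F(t,y(t))$ (rather than the weaker $F(t,\tilde y(t))$). The uniform hypertangent structure provides the geometric room for each local repair, but propagating the net effect over $[0,\T]$ without error blow-up requires both short corrective subintervals and a contracting iteration, so that earlier repairs are not undone by later Filippov--Wa\v zewski reapproximations. Balancing these competing requirements is the main technical content of Frankowska--Rampazzo's argument and relies essentially on the uniformity of $\eta$, $\delta$ and on the time-regularity \ref{IPC:lipschitzity}.
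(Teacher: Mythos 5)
The paper does not prove this statement at all: Theorem~\ref{thm:frankowskarampazzo} is imported verbatim from Frankowska--Rampazzo \cite[Th.~4.2]{frankowska2000filippov} and used as a black box, so there is no in-paper argument to compare yours against. Your outline does capture the broad strategy of the cited proof (unconstrained Filippov--Wa\v zewski approximation, boundary repair via the inward pointing condition \ref{IPC:ipc}, and a Gronwall-type estimate), so as a summary of where the result comes from it is directionally sound.

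However, as a proof it has a genuine gap, which you yourself flag: the convergence of the ``correct, reapproximate by Theorem~\ref{thm:FW}, iterate'' scheme is asserted, not established. Without a contraction estimate, each reapproximation can undo earlier repairs or accumulate error, and nothing in the sketch rules this out; deferring this to ``the main technical content of Frankowska--Rampazzo's argument'' is precisely the step that would need to be carried out. Two further points are glossed over. First, the repair step as described selects velocities in $F(s,\cdot)\cap\mathscr I(x_0)$ for a \emph{fixed} nearby boundary point $x_0$, which yields a curve whose derivative lies in $F$ evaluated at $x_0$ rather than at the curve's current position; restoring the strict inclusion $y'(t)\in F(t,y(t))$ requires an additional Lipschitz-in-$x$ correction and another Filippov-type selection, with its own error contribution. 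Second, the role you assign to \ref{IPC:lipschitzity} (``absorbing the reparametrization error'') is plausible but unsubstantiated: the corrective subintervals effectively shift the time parameter of the remaining trajectory, and one must verify that the total time shift is of order $\varepsilon/\eta$ summed over all excursions, which in turn requires a count of the excursions --- none of this is estimated. In the context of this paper the appropriate ``proof'' is simply the citation; if you want a self-contained argument you would need to reproduce the quantitative construction of \cite{frankowska2000filippov} in full.
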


\begin{proof}[Proof of Theorem \ref{cor:IPC}]
    The proof of the first set of equalities follows the same lines as the proof of the first part of Theorem \ref{cor:int}, using Theorem \ref{thm:frankowskarampazzo} instead of Theorem \ref{thm:FW} and taking $\nu$ to be small enough that $C\nu(1+\T)\leq \varepsilon$. Observe that, just like in the proof of Theorem \ref{cor:int}, the endpoint $\eta(T)$ is contained in $\destination$ for $\varepsilon$ small enough, by virtue of $X$ being open and $\|\gamma(T)-\eta(T)\|\leq \varepsilon$. The other set of equalities \eqref{eq:mysterious} follows similarly; in this case, upon applying Theorem \ref{thm:frankowskarampazzo} on $\tilde P$ and projecting $y$, we get the curve $\eta$ that ends in $\eta(\T)\in \Omega\subset\overline\Omega=\overline X$, so we again know that $\eta$ is a valid contender.
\end{proof}

\section{Bounds for the gaps}
\label{sec:gapbound}

In this section we briefly discuss practical methods to bound the gap $\mincurves(\overline\domain,\overline \destination)-M_{\mathrm o}(\overline\domain,\overline \destination)$ using the algorithms described in \cite{OCP08}.

Assume that $X$ and $\Omega$ are open, and let $(\domain_\epsilon)_{\epsilon>0}$ and $(\destination_\epsilon)_{\epsilon>0}$ be  families of nonempty compact sets contained in the open sets $\domain_\epsilon\subset \domain$ and $\destination_\epsilon\subset \destination$, and converging to $\domain_\epsilon\to\overline\domain$ and $\destination_\epsilon\to\overline\destination$ in the Hausdorff metric, respectively. For example, one can take
\[\domain_\epsilon=\{x\in \domain:\dist(x,\partial\domain)\geq \epsilon\}\] and\[\destination_\epsilon=\{x\in \destination:\dist(x,\partial \destination)\geq\epsilon\},\]
or, if $\domain=\{x\in\R^n:h(x)> 0\}$ for some continuous function $h$, one can take %
\[\domain_\epsilon=\{x\in\R^n:h(x)\geq \epsilon\},\]
and similarly for $\destination_\epsilon$.
Then $\domain_\epsilon$ and $X_\epsilon$ are interior approximations of $\domain$ and $X$.
\begin{proposition}
Assume that $\destination$ and $\domain$ are open sets.
We have the following bounds for the gaps:
\begin{equation}
\label{gap1}\mincurves(\overline\domain,\overline \destination)-M_{\mathrm o}(\overline\domain,\overline \destination)
\leq M_{\mathrm o}(\domain_{\epsilon},\destination_\epsilon)-M_{\mathrm o}(\overline\domain,\overline \destination).
\end{equation}
\end{proposition}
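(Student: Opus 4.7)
The claimed inequality, after subtracting $\minoccupation(\overline\domain,\overline\destination)$ from both sides, is equivalent to the cleaner statement $\mincurves(\overline\domain,\overline\destination) \leq \minoccupation(\domain_\epsilon,\destination_\epsilon)$. My plan is to establish this by sandwiching the two sides through the ambient open pair $(\domain,\destination)$ itself: since $\domain$ and $\destination$ are already open by hypothesis, the first conclusion of Theorem \ref{cor:int} applies directly and no auxiliary ``no-boundary-gap'' condition is needed.

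First I would record two monotonicity observations. For $\mincurves$, the inclusions $\domain \subseteq \overline\domain$ and $\destination \subseteq \overline\destination$ mean that every admissible curve for $(\domain,\destination)$ is also admissible for $(\overline\domain,\overline\destination)$, so $\mincurves(\overline\domain,\overline\destination) \leq \mincurves(\domain,\destination)$. For $\minoccupation$, the inclusions $\domain_\epsilon \subseteq \domain$ and $\destination_\epsilon \subseteq \destination$ mean that every admissible pair $(\mu,\mu_\partial)$ for $(\domain_\epsilon,\destination_\epsilon)$ is also admissible for $(\domain,\destination)$ (the support constraints and the flow identity are preserved under enlarging the ambient sets), so $\minoccupation(\domain,\destination) \leq \minoccupation(\domain_\epsilon,\destination_\epsilon)$. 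Then, assuming the regularity hypotheses \ref{FW:first}--\ref{FW:last} are in force from Section \ref{sec:interiorboundary}, the first conclusion of Theorem \ref{cor:int} applied to the open pair $(\domain,\destination)$ supplies the bridging equality $\mincurves(\domain,\destination) = \minoccupation(\domain,\destination)$. Concatenating the three relations gives $\mincurves(\overline\domain,\overline\destination) \leq \minoccupation(\domain_\epsilon,\destination_\epsilon)$, which is the desired bound.

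I do not foresee a genuine obstacle: the content of the proposition is essentially monotonicity in the constraint sets, glued together by the no-gap theorem on the ambient open domain. The only points to be careful about are (i) verifying that the directions of monotonicity align for $\mincurves$ and $\minoccupation$ (both are nonincreasing when the constraint sets are enlarged, for the same ``more feasible candidates'' reason), and (ii) flagging that the standing regularity assumptions from Section \ref{sec:interiorboundary} are implicitly required so that Theorem \ref{cor:int} is available on $(\domain,\destination)$.
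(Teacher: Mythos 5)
Your proof is correct and follows essentially the same route as the paper: both reduce \eqref{gap1} to the equivalent bound $\mincurves(\overline\domain,\overline\destination)\leq\minoccupation(\domain_\epsilon,\destination_\epsilon)$ and obtain it by chaining monotonicity of the infima under enlargement of the constraint sets with the no-gap equality of Theorem~\ref{cor:int} on the open pair $(\domain,\destination)$. The only cosmetic difference is that the paper carries the monotonicity through $\minarcs$ (where admissibility of Young measures is manifestly preserved) and then converts to $\minoccupation$ via Lemma~\ref{lem:easyineqs}, whereas you argue directly on occupation measures; your flagging of \ref{FW:first}--\ref{FW:last} as implicit standing hypotheses matches the paper's reliance on Theorem~\ref{cor:int}.
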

Observe that in \eqref{gap1}, lower bounds converging to $M_{\mathrm o}(\domain_{\epsilon},\destination_\epsilon)$ are accessible to algorithms like those of \cite{korda2018moments,OCP08} when $h$ and the other data of the problem are polynomial, while upper bounds on $M_{\mathrm o}(\overline\domain,\overline \destination)$ can be computed using  direct or indirect optimal control numerical methods (see, e.g. \cite{caillau2023algorithmic}). 
\begin{proof}
    We have, by the first part of Corollary \ref{cor:int},
\begin{equation*}
    \minarcs(\domain_{\epsilon},\destination_\epsilon) \geq \minarcs(\domain,\destination)=\mincurves(\domain,\destination
    )
    \geq \mincurves(\overline\domain,\destination)
\geq \minarcs(\overline\domain,\overline \destination),
\end{equation*}
because every contender for $\minarcs(\domain,\destination)$ is a contender for $\minarcs(\domain_\epsilon,\destination_\epsilon)$ for all $\epsilon$ small enough.
So we have a bound of the gap (which is the left-hand side): 
\begin{align*} 
&\mincurves(\overline\domain,\destination)-\minoccupation(\overline\domain,\overline \destination)\\
&=
\mincurves(\overline\domain,\destination)-\minarcs(\overline\domain,\overline \destination)\\
&\leq
 \minarcs(\domain_{\epsilon},\destination_\epsilon)-\minarcs(\overline\domain,\overline \destination)\\
&=\minoccupation(\domain_{\epsilon},\destination_\epsilon)-\minoccupation(\overline\domain,\overline \destination).
\end{align*}
Inequality \eqref{gap1} follows from $\mincurves(\overline\domain,\overline\destination)\leq \mincurves(\overline\domain,\destination)$, which is true since all contenders for the latter are also contenders for the former.
\end{proof}

\section{Conclusion and open questions}
\label{sec:conclusions}
We have proposed results ensuring the absence of a relaxation gap exploiting properties from topology, differential geometry, and convex analysis, and discussed a way to bound the gaps. Apart from the general question of characterizing relaxation gaps, other important questions remain open, such as that of whether the gap phenomenon enjoys any genericity or structural stability properties. We hope to be able to address some of these questions in the future.
\appendix
\label{appendix}
\begin{proof}[Proof of Lemma \ref{lem:easyineqs}]
    That $\minoccupation=\minarcs$ follows from \cite[Th.\,19]{patrick}, which shows that every relaxed occupation measure $\mu$ can be decomposed as a convex combination of Young measures together with their corresponding integral curves:~given a contender  $(\mu,\mu_\partial)$ in \eqref{min:occupation}, there is a collection $(\nu^i_t)_{i\in \mathscr I}$ a  probability measure $\eta$ in a (possibly uncountable) set $\mathscr I$ such that, if $\gamma_i$ is the integral curve of $(\nu^i_t)_t$ with $\gamma_i(0)=\initial$, then 
    for every measurable function $\phi(t,x,u)$,
    \begin{equation*}
    \int_{[0,\T]\times\domain\times\controls} \phi(t,x,u)\,d\mu(t,x,u)=
    \int_{\mathscr I}\int_0^{\T}\hspace{-1mm}\int_{\controls} \phi(t,\gamma_i(t),u)\,d\nu_t^i(u)\,dt\,d\eta(i),
    \end{equation*} %
    and for every measurable function $\psi(x)$, 
    \[\int_{\Omega}\psi(x)\,d\mu_\partial(x)=\int_{\mathscr I}\gamma_i(\T)\,d\eta(i).\]
    Since the cost is linear in $(\mu,\mu_\partial)$, the infimum restricted to the set of Young measures must coincide with the infimum on the whole set of relaxed occupation measures.
    
    We turn to show $\minarcs\leq \mincurves$.
    Observe that all the contenders in \eqref{min:orig} are represented by some contender in \eqref{min:relaxed}. Indeed, given a contender $u$ for \eqref{min:orig}, we can take $\nu_t$ to be the atomic measure concentrated at $u(t)$, that is, $\nu_t=\delta_{u(t)}$,
    and then the integral curve of $\nu_t$ is precisely the curve $\gamma$ from the contender pair, because $\int_U f(t,\gamma(t),u)\,d\nu_t(u)=\int_U f(t,\gamma(t),u)\,d\delta_{u(t)}=f(t,\gamma(t),u(t))=\gamma'(t)$ for almost every $t\in[0,T]$, and $\gamma$ is absolutely continuous so $\gamma(t)=\gamma(0)+\int_0^t\gamma'(s)\,ds$.
\end{proof}

\printbibliography
\end{document}